\newtheorem{theorem}{Theorem}
\newtheorem{definition}[theorem]{Definition}
\newtheorem{remark}[theorem]{Remark}
\newenvironment{proof}[1][Proof]{\textbf{#1.} }{\qed}
\def\qed{{~~}\vskip-2.5ex\hfill$\Box$\vskip2pt}
\let\frak=\mathfrak
\begin{document}

\title{The Born rule as structure of spectral bundles\\(extended abstract)}
\author{Bertfried Fauser, Guillaume Raynaud, Steven Vickers\\
School of Computer Science, University of Birmingham,\\
Birmingham, B15 2TT, UK\\{}
[B.Fauser$\vert$G.Raynaud$\vert$S.J.Vickers]@cs.bham.ac.uk}
\maketitle
\begin{abstract}
Topos approaches to quantum foundations are described in a unified way by
means of spectral bundles, where the base space is a space of contexts and
each fibre is its spectrum. Differences in variance are due to the bundle
being a fibration or opfibration. Relative to this structure, the
probabilistic predictions of the Born rule in finite dimensional settings are
then described as a section of a bundle of valuations. The construction uses
in an essential way the geometric nature of the valuation locale monad.
\end{abstract}

%
%

\section{Introduction}

Two topos approaches to quantum foundations, \cite{DoeringIsham:WhatThingTTFP}
and \cite{HeunenLandsmanSpitters:ToposAQT}\cite{HeunenLandsmanSpitters:Bohrn},
describe a quantum system given algebraically (as von Neumann algebra or
C*-algebra respectively) as a topos combined with a space defined internally
``in'' the topos. However, toposes present many difficulties to the beginner:
their basic definitions are non-trivial, and many important parts of
topos-theory relate only indirectly to the basic definitions. The aim of this
paper is to describe the topos approaches -- both existing and prospective --
in terms of ``spectral bundles'' $p:\Sigma\rightarrow B$.\footnote{We shall
use the word ``bundle'' in a very general sense, of arbitrary map between two
topological spaces (more precisely, the spaces need to be point-free, but the
naive reader can largely ignore this issue). If ``bundle'' just means the same
as ``map'' one might wonder why we should waste a second word on the same
notion. However, they will carry different connotations. A bundle
$p:X\rightarrow B$ is to be thought of as a family of spaces (the fibres
$p^{-1}(\{b\})$) parameterized by base point $b$. This is exactly the view one
has of, for example, the tangent bundle over a differentiable manifold.} We
shall refer to the base points $C$ in $B$ as \emph{contexts}, or (to use a
phrase of \cite{DoeringIsham:WhatThingTTFP}) ``classical points of view'', and
each fibre $C^{\ast}\Sigma=p^{-1}\{C\}$ as the \emph{spectrum} of $C$, also
written $\Sigma_{C}=\operatorname*{Spec}(C)$. It is a ``classical state
space'' from point of view $C$. As we shall see below, a canonical realization
of this in a quantum situation is where $C$ represents a commuting set of
observables and then the state space is the set of their common eigenspaces.
Each of the observables can be diagonalized with respect to those eigenspaces,
and then the diagonal entries, the corresponding eigenvalues, are the measured
values, while the resulting state is got by projecting to the corresponding eigenspace.

In the topos approaches, the topos is the topos $\mathcal{S}B$ of sheaves over
$B$ and the bundle corresponds to something internal in that topos: either an
object or a point-free topological space (locale). Since our aim is to replace
the language of toposes by that of spaces, one might wonder if there is still
any need at all for topos theory in this topos approach. The key insight is
that the study of \emph{bundles over} $B$ is equivalent to the study of
\emph{spaces internal in} $\mathcal{S}B$.\footnote{``Space'' here implicitly
means point-free.} In other words, the study of bundles is just a version of
topology, but different from ordinary classical topology since it has to be
adapted to the non-classical internal mathematics of toposes. As we shall see,
under certain logical constraints of \emph{geometricity} this becomes
equivalent to treating bundles as ``fibrewise topology'', ordinary topology
but sprinkled with base point parameters $C$ from $B$.

In the present paper this is the key to our description of the Born rule. We
use a construction that, for any space $X$, gives a \emph{valuation space}
$\frak{V}X$ whose points are the valuations (regular measures) on $X$. Since
this is geometric, it can be applied fibrewise to any bundle $p:X\rightarrow
B$ to give a valuation bundle $q:\frak{V}_{B}X\rightarrow B$. A valuation on a
fibre of the spectral bundle, in other words a valuation on the spectrum of
some context, turns out to be exactly the kind of probability distribution
that is expressed in the Born rule. It is topos theory -- more precisely, the
understanding of the logic of topos-internal mathematics and of the
geometricity constraints -- that gives us access to this fibrewise valuation
space and allows us to infer that it has good properties.

For other treatments of the Born rule in the topos approach
see~\cite{Doering:QuStMSP,DoeringIsham:ClassicalQPTV} .

%
%
%
%

\subsection{States}

\paragraph{Classical physics:}

Let us be clear about the notions of \emph{state} that will concern us. In
classical physics, it is assumed that (given a selection of observables $O$)
there is a set $\Sigma$ of (classically) \emph{pure states} that determine the
values of all the observables. Thus each observable $O$ is realized
mathematically as a function $\tilde{O}:\Sigma\rightarrow\mathbb{R}$. To
measure $O$ in state $x$ is then to discover the value $\tilde{O}(x)$. In
practice, as for instance in thermodynamics, we often do not have access to
the exact pure states and are reduced to using probabilistic distributions. We
shall write $\frak{V}^{(1)}(X)$ for the space of probability valuations
(regular measures with total mass 1) on $X$, assuming that $X$ has appropriate
structure (specifically: $X$ will be a point-free topology). In that case $O$
is also realized as $\frak{V}^{(1)}(\tilde{O}):\frak{V}^{(1)}(\Sigma
)\rightarrow\frak{V}^{(1)}(\mathbb{R})$, taking \emph{mixed states} (points of
$\frak{V}^{(1)}(\Sigma)$) to distributions of reals (points of $\frak{V}%
^{(1)}(\mathbb{R})$). In other words, once $m\in\frak{V}^{(1)}(\Sigma)$ is
given, $\tilde{O}$ becomes a \emph{random variable}. Although in practice the
mixed states of this probabilistic approach may be the best we can do, it is
nonetheless assumed that they do arise as probability distributions of pure states.

\paragraph{Quantum physics:}

In quantum physics, on the other hand, an observable $O$ may be realized as a
self-adjoint operator $\hat{O}$ on some Hilbert space $\mathcal{H}$, which we
assume, for simplicity, is of finite dimension $n$. The spectral theorem then
tells us that $\hat{O}=\sum_{i=1}^{m}\lambda_{i}P_{i}$ where the $\lambda_{i}%
$s are the eigenvalues of $\hat{O}$ and the $P_{i}$s form a complete (they sum
to $1$) set of mutually orthogonal projectors (self-adjoint idempotents),
projecting onto the corresponding eigenspaces. The \emph{quantum} pure states
are now taken to be the non-zero vectors $|{\phi}\rangle\in\mathcal{H}$,
modulo scalar multiplication (in other words the states are 1-dimensional
subspaces, or \emph{rays}, in $\mathcal{H}$). To measure $O$ in state $|{\psi
}\rangle$ is probabilistic. Its measured result is one of the eigenvalues
$\lambda_{i}$, with probability $\frac{\langle{\psi}\mid{P_{i}|\psi}\rangle
}{\langle{\psi}\mid{\psi}\rangle}$ according to the Born rule, and with
resulting state $P_{i}|{\psi}\rangle$ according to the L\"{u}ders Principle.
Quantum mixed states (probabilistic distributions over quantum pure states)
are still of use, but now even the pure states have a probabilistic nature.

This raises the question of whether there might be \emph{classically} pure
states out of which the quantum pure states are mixed. The answer depends on
the observables. If we consider $O$ alone, then we can take $\Sigma
=\{1,\ldots,m\}$ (for the $m$ eigenvalues, taken as distinct). Then $\tilde
{O}(i)=\lambda_{i}$ and $|{\psi}\rangle$ is the mixed state in which $i$ has
weight $\frac{\langle{\psi}\mid{P_{i}|\psi}\rangle}{\langle{\psi}\mid{\psi
}\rangle}$. This extends to the situation where we have any collection of
commuting observables, since they are simultaneously diagonalizable and there
is a complete set of orthogonal projectors $P_{i}$ ($1\leq i\leq m$) such that
each $\hat{O}$ can be expressed as $\sum_{i=1}^{m}\lambda_{i}P_{i}$ (although
now we cannot necessarily assume that the $\lambda_{i}$s are distinct).

However, the Kochen-Specker Theorem tells us that in general when we have
\emph{non-}commuting observables there is no possible space of classically
pure states out of which the quantum pure states can be mixed.

We shall in general take ``context'' to mean some situation, such as a family
of commuting observables, in which it is possible to find a classically pure
state space -- the spectrum of the context. A fundamental aspect (going back
to~\cite{IshamBfield:ToposPKST1} and~\cite{IshamBfield:ToposPKST2}) of the
topos approach to quantum physics is to work in a mathematics that works in
contexts but in some sense works in them all at once. In this mathematics,
though it is \emph{logically} non-classical, there is some possibility of
being \emph{physically} classical.

%
%
%
%

\subsection{Toposes and bundles}

The big content of topos theory is that it provides a generalization of
topological spaces, new spaces whose topological structure must be given by
stipulating the \emph{sheaves,} not just the opens. However, the topos
approach to quantum physics as currently conceived uses only ungeneralized
spaces (though point-free, as locales), and one of our aims here is to conduct
our discussion in terms of those spaces.

What toposes bring is a more conscious use of sheaves, and in particular the
ability to manipulate them by interpreting mathematics (subject to
constructivist constraints) as the internal mathematics of $\mathcal{S}B$.

To get a feel for how this works, think of an open $U$ of a space $B$ as a
``continuously varying truth value'', parametrized by points $b$ of $B$. The
value is true when $b\in U$: so $U$ is a generalized truth value that says not
\emph{whether} something is true, but \emph{where}. There is an asymmetry,
``locality of truth'', deriving from the nature of openness: if the value is
true at $x$ then it is true throughout some neighbourhood of $x$. The same
does not hold for falsehood, and this asymmetry shows up in the associated
logic -- negation cannot be a connective. Another way to see the situation,
which respects this asymmetry, is as a ``continuously varying subsingleton''.
If we write $\ast$ for the unique element of some standard singleton set, then
the subsingleton is $\{\ast\}$ where $b\in U$, $\emptyset$ where $b\notin U$.
Thus locality of truth translates into locality of existence of $\ast$. This
view is simply that of the inclusion $U\hookrightarrow B$ as bundle, since the
fibre at $b$ is exactly the subsingleton described.

Sheaves can be understood as generalizing ``continuously varying
subsingletons'' to ``continuously varying sets''. In bundle form, when one
incorporates not only locality of existence of elements but also locality of
equality between them, one gets the notion of \emph{local homeomorphism}
$X\rightarrow B$, the fibres being the continuously varying sets. (Note that
the definition implies that the fibres, as subspaces of $X$, all have the
discrete topology.)

The topos of sheaves over $B$ is (equivalent to) the topos of local
homeomorphisms with codomain $B$. Many mathematical constructions can also be
carried through on sheaves, and this interpretation gives an \emph{internal
mathematics} of the topos. For reasons such as the locality of truth, it does
not obey all reasoning principles of classical logic and set theory, but
nonetheless it is intuitionistic. The methodology of manipulating sheaves by
reasoning intuitionistically has proved an effective one. Moreover, there is a
particularly important \emph{geometric} fragment comprising those
constructions that work fibrewise on the local homeomorphisms. For these the
intuitionistic features are less obtrusive.

The final step is to move to general bundles as ``continuously varying
spaces''. Just as local homeomorphisms are (by definition) ``sets'' in the
internal mathematics, we should also like general bundles to be spaces there.
This idea works well, with two provisos. First, the spaces need to be
point-free, and, second, the logic needs to be geometric in order for it to
work fibrewise, so that the whole construction varies continuously with the
base points. Getting the mathematics to work within the geometricity
constraints is non-trivial, and the present work provides a case study in a
more general geometrization programme. However, it has the beneficial effect
of enabling point-based reasoning for point-free topology.

The key result is what we shall call the \emph{Localic Bundle Theorem}
(\cite{FourmanScott:SheavesLogic}, \cite{JoyalTier}), which states that frames
in the internal mathematics of a topos are dual equivalent to localic
geometric morphisms with that topos as codomain. Restricted to the case where
the topos is that of sheaves over a locale $B$, these are equivalent to locale
maps with codomain $B$, in other words localic bundles over $B$.

This has an important consequences for constructions on locales. If the
construction is topos-valid then it can be applied to internal frames in
toposes of sheaves and hence gives a construction on localic bundles. We shall
be particularly interested in constructions that are \emph{geometric} in the
sense that, when applied to bundles, are preserved by pullback, since this
implies that they work fibrewise.

\emph{To summarize:} In this work we shall be working with bundles, and with
constructions on spaces that can be applied fibrewise to the bundles. Although
the details will be largely hidden, the justification for the fibrewise
construction, and for choosing the appropriate topology on the bundle space,
will be that the constructions work point-free in a way that is topos-valid
and moreover geometric. This perspective on geometric logic is summarized in
\cite{Vickers:ContIsGeom}.

\subsection{Bundles as fibrations and opfibrations}

Given a bundle $p:\Sigma\rightarrow B$ one can ask how the fibres interact
with specialization order amongst the base points: if $C\sqsubseteq D$, is
there a corresponding map, in either direction, between the fibres $C^{\ast
}\Sigma$ and $D^{\ast}\Sigma$? In general there is no such map, but there are
special classes of bundles, known as fibrations and opfibrations, for which
there are. The general theory \cite{Street} works in an arbitrary 2-category
and has been examined in \cite{Jo:FibPP} in the 2-category of toposes. Our own
interest is in the restriction to the 2-category of locales, where the
category enrichment is in fact order enrichment, the specialization order on
each homset: if $f,g:X\rightarrow Y$ then $f\sqsubseteq g$ if $f^{\ast}V\leq
g^{\ast}V$ for all $V$ open in $Y$.

For a fibration $p$, for $C\sqsubseteq D$ in $B$ there is a map
contravariantly between the fibres, $D^{\ast}\Sigma\rightarrow C^{\ast}\Sigma
$, while for an opfibration it is covariant. Moreover, in both cases the fibre
maps are characterized universally in a way that determines them uniquely. We
shall describe this in a way that deals with the points $C\sqsubseteq D$
generically, allowing for generalized points. As we shall see later, these
notions provide a fundamental explanation for the difference in variance seen
in two topos approaches to quantum theory.

Such pairs $C\sqsubseteq D$ are classified by the exponential locale
$B^{\mathbb{S}}$ where $\mathbb{S}$ is the Sierpinski locale with points
$\bot\sqsubseteq\top$.\footnote{Potentially it has other points too, but they
arise only in non-classical mathematics.} Given $f:\mathbb{S}\rightarrow B$, a
point of $B^{\mathbb{S}}$, we have $f(\bot)\sqsubseteq f(\top)$ in $B$, and
this gives two maps $\pi_{\bot}\sqsubseteq\pi_{\top}:B^{\mathbb{S}}\rightarrow
B$. They are generic. For any other $C\sqsubseteq D:W\rightarrow B$, there is
a unique $f:W\rightarrow B^{\mathbb{S}}$ such that $C=\pi_{\bot}\circ f$ and
$D=\pi_{\top}\circ f$. This allows us to understand the points of
$B^{\mathbb{S}}$ as the pairs $C\sqsubseteq D$. Because of this we don't ask
about fibre maps for arbitrary pairs $C\sqsubseteq D$, but just for the
generic pair $\pi_{\bot}\sqsubseteq\pi_{\top}$. A fibre map found there can
then be pulled back for arbitrary $C\sqsubseteq D$.

Over $B^{\mathbb{S}}$ we have two bundles $\pi_{\bot}^{\ast}\Sigma$ and
$\pi_{\top}^{\ast}\Sigma$, and so far there is no reason why there should be a
map between them. However, we do have a span over them from the bundle
$p^{\mathbb{S}}:\Sigma^{\mathbb{S}}\rightarrow B^{\mathbb{S}}$. For example,
the commutative square%
\[%
\begin{array}
[c]{rll}%
\Sigma^{\mathbb{S}} & \overset{\pi_{\bot}}{\longrightarrow} & \Sigma\\
p^{\mathbb{S}}\downarrow &  & \downarrow p\\
B^{\mathbb{S}} & \underset{\pi_{\bot}}{\longrightarrow} & B
\end{array}
\]
gives us a map $\rho_{\bot}:\Sigma^{\mathbb{S}}\rightarrow\pi_{\bot}^{\ast
}\Sigma$ over $B^{\mathbb{S}}$. Similarly, for $\top$ we get a map
$\lambda_{\top}:\Sigma^{\mathbb{S}}\rightarrow\pi_{\top}^{\ast}\Sigma$ over
$B^{\mathbb{S}}$.

\begin{definition}
With the notation as above, $p$ is a \emph{fibration} if $\lambda_{\top}$ has
a right adjoint $\rho_{\top}$ over $B^{\mathbb{S}}$, with its counit an equality,
and is an
\emph{opfibration} if $\rho_{\bot}$ has a left adjoint $\lambda_{\bot}$ over
$B^{\mathbb{S}}$, with its unit an equality.
\end{definition}

To say that $\rho_{\top}$ is right adjoint of $\lambda_{\top}$ is to say that
$\operatorname*{Id}_{\Sigma^{\mathbb{S}}}\sqsubseteq\rho_{\top}\circ
\lambda_{\top}$ (the unit) and $\lambda_{\top}\circ\rho_{\top}\sqsubseteq
\operatorname*{Id}_{\pi_{\top}^{\ast}\Sigma}$ (the counit), and similarly for $\rho_{\bot}$
and $\lambda_{\bot}$. In these two cases we get fibre maps. For a fibration we
get, contravariantly, $\rho_{\bot}\circ\rho_{\top}:\pi_{\top}^{\ast}%
\Sigma\rightarrow\pi_{\bot}^{\ast}\Sigma$, while for an opfibration we get,
covariantly, $\lambda_{\top}\circ\lambda_{\bot}:\pi_{\bot}^{\ast}%
\Sigma\rightarrow\pi_{\top}^{\ast}\Sigma$.%
\[%
\begin{array}
[c]{lllll}%
\pi_{\bot}^{\ast}\Sigma &
\begin{array}
[c]{l}%
\overset{\lambda_{\bot}}{\dashrightarrow}\\
\underset{\rho_{\bot}}{\longleftarrow}%
\end{array}
& \Sigma^{\mathbb{S}} &
\begin{array}
[c]{l}%
\overset{\lambda_{\top}}{\longrightarrow}\\
\underset{\rho_{\top}}{\dashleftarrow}%
\end{array}
& \pi_{\top}^{\ast}\Sigma
\end{array}
\]

\begin{theorem}
\label{FibOpfibThm}Let $p:\Sigma\rightarrow B$ be a bundle as above.

\begin{enumerate}
\item  If $p$ is a local homeomorphism, thus corresponding to an object of
$\mathcal{S}B$, then it is an opfibration.

\item  If $p$ corresponds to a compact regular locale in $\mathcal{S}B$, then
it is a fibration.
\end{enumerate}
\end{theorem}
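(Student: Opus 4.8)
The plan is to handle both parts uniformly. By the Localic Bundle Theorem and geometricity we may compute the three bundles $\pi_{\bot}^{\ast}\Sigma$, $\pi_{\top}^{\ast}\Sigma$, $\Sigma^{\mathbb{S}}$ over $B^{\mathbb{S}}$ and the comparison maps $\rho_{\bot},\lambda_{\top}$ fibrewise: since $(-)^{\mathbb{S}}$ and pullback along a locale map are geometric, it suffices to describe them over an arbitrary pair $C\sqsubseteq D$ in $B$ (a generalized point of $B^{\mathbb{S}}$). There the fibre of $\pi_{\bot}^{\ast}\Sigma$ is $\Sigma_{C}$, that of $\pi_{\top}^{\ast}\Sigma$ is $\Sigma_{D}$, and that of $\Sigma^{\mathbb{S}}$ is the locale of pairs $x\sqsubseteq y$ with $x\in\Sigma_{C}$, $y\in\Sigma_{D}$; with this reading $\rho_{\bot}$ sends $(x\sqsubseteq y)$ to $x$ and $\lambda_{\top}$ sends it to $y$. (To make this rigorous one works inside $\mathcal{S}(B^{\mathbb{S}})$: $\pi_{\bot}^{\ast}\Sigma$ and $\pi_{\top}^{\ast}\Sigma$ are the internal locales obtained from the internal frame of $\Sigma$ by pulling back along the generic $\pi_{\bot}\sqsubseteq\pi_{\top}$, and the adjunctions and their (co)units are to be verified at the level of these internal frames.)

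For (1) the key is that a local homeomorphism lifts specializations of the base uniquely. If $x\in\Sigma_{C}$ and $C\sqsubseteq D$, then an open neighbourhood of $x$ in $\Sigma$ is carried isomorphically onto some open $U\ni C$ of $B$, and $C\sqsubseteq D$ forces $D\in U$; hence there is exactly one $y\in\Sigma_{D}$ with $x\sqsubseteq y$. So the fibre of $\Sigma^{\mathbb{S}}$ over $C\sqsubseteq D$ is already determined by its $\bot$-endpoint, i.e.\ $\rho_{\bot}\colon\Sigma^{\mathbb{S}}\to\pi_{\bot}^{\ast}\Sigma$ is an isomorphism of bundles over $B^{\mathbb{S}}$. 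Taking $\lambda_{\bot}=\rho_{\bot}^{-1}$ then exhibits the opfibration structure, with unit (indeed also counit) an equality. This is the order-enriched localic refinement of the classical fact that \'etale maps are opfibrations, and it should go through with little friction.

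For (2) we want the dual: for each $y\in\Sigma_{D}$ and $C\sqsubseteq D$ a canonical $x_{y}\in\Sigma_{C}$ with $x_{y}\sqsubseteq y$, terminal among points of $\Sigma_{C}$ below $y$. Such an assignment is a section $\rho_{\top}$ of $\lambda_{\top}$, and terminality is precisely the unit inequality $\mathrm{Id}_{\Sigma^{\mathbb{S}}}\sqsubseteq\rho_{\top}\circ\lambda_{\top}$; together with the counit equality $\lambda_{\top}\circ\rho_{\top}=\mathrm{Id}_{\pi_{\top}^{\ast}\Sigma}$ this says $p$ is a fibration. Heuristically $x_{y}$ is the unique point of the fibre $\Sigma_{C}$ lying in the closure of $\{y\}$ in $\Sigma$: properness (fibrewise compactness) makes $p$ closed, so the closure of $\{y\}$ surjects onto $\overline{\{D\}}\ni C$ and therefore meets $\Sigma_{C}$, while separatedness (fibrewise Hausdorffness) forces $\overline{\{y\}}\cap\Sigma_{C}$ to be a subsingleton; together a point.

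The hard part will be making (2) actually work, because the closure of a point is not a geometric construction, so the heuristic above cannot be applied fibrewise as stated. The real argument must construct $\rho_{\top}$ inside $\mathcal{S}(B^{\mathbb{S}})$ by topos-valid (not necessarily geometric) reasoning on the internal compact regular frame $\mathcal{O}\Sigma$ --- using properness in its frame-level form (good behaviour of the right adjoint of the frame map on directed joins, equivalently internal compactness of the top element) and regularity in its frame-level form (every open is the join of the opens well inside it) --- rather than via closures of points; and it must keep careful track of the fact that $\pi_{\bot}^{\ast}\Sigma$ and $\pi_{\top}^{\ast}\Sigma$, though ``fibrewise the same'', are genuinely distinct internal locales joined only by the $2$-cell $\pi_{\bot}\sqsubseteq\pi_{\top}$. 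Once $\rho_{\top}$ is available, the counit equality and the unit inequality reduce to manipulations with these internal frames. I expect (2) to be considerably more delicate than (1) for exactly these reasons.
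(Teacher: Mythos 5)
You should first know that the paper does not prove this theorem at all: its ``proof'' is a bare citation of Johnstone's \emph{Fibrations and Partial Products in a 2-category} \cite{Jo:FibPP}, so what you are really attempting is a reconstruction of Johnstone's argument. Your general framing is the right one --- reduce everything to the generic pair $\pi_{\bot}\sqsubseteq\pi_{\top}$ over $B^{\mathbb{S}}$, identify $\Sigma^{\mathbb{S}}$ as the bundle of specialization pairs, and recognise that the unit/counit equalities in the definition are exactly the section-plus-universality conditions you state. Your treatment of (1) is essentially the standard proof: for a local homeomorphism the lifting of a specialization $C\sqsubseteq D$ through a given $x$ in the fibre over $C$ exists and is unique (your neighbourhood argument), so $\rho_{\bot}$ is an isomorphism and $\lambda_{\bot}=\rho_{\bot}^{-1}$ does the job; the only caveat is that the pointwise reasoning must be read as reasoning about generalized points, or equivalently as the internal statement that maps from $\mathbb{S}$ into a discrete locale factor through a point, which you do flag.

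Part (2), however, contains a genuine gap, and you have named it yourself: the closure-of-a-point heuristic orients the reader correctly (properness gives existence of a point of $\overline{\{y\}}$ over $C$, separatedness gives uniqueness), but it is not a construction of $\rho_{\top}$, and the passage from ``internally compact regular'' to an actual right adjoint of $\lambda_{\top}$ over $B^{\mathbb{S}}$ with counit an equality is the entire mathematical content of the statement. Saying that the real argument ``must be carried out on the internal frame using compactness and regularity'' describes the shape of the missing proof rather than supplying it; in particular you never produce the frame homomorphism underlying $\rho_{\top}$ nor verify the unit inequality. This is precisely the work done in \cite{Jo:FibPP} via the theory of proper and separated locale maps, and without it your part (2) is a plausible plan, not a proof. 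If you want to complete it rather than cite it, the place to start is the characterisation of compact regular internal locales as those whose structure map is proper and separated, together with the preservation of these properties under pullback along $\pi_{\bot},\pi_{\top}$.
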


\begin{proof}
\cite{Jo:FibPP}
\end{proof}

\subsection{Valuation locales}

Standard measure theory works badly in toposes, suffering from deep
set-theoretic problems. For many purposes a satisfactory replacement can be
found by replacing measurable spaces and measures by locales $X$ and
\emph{valuations} $m$ on them. Such an $m$ is a Scott continuous map from the
frame of opens $\Omega X$ to the lower reals $\overrightarrow{[0,\infty]}$,
satisfying the \emph{modular law} $m(U\vee V)+m(U\wedge V)=m(U)+m(V)$ and also
$m(\emptyset)=0$. (The \emph{lower} reals differ constructively from the
Dedekind reals in being approximable from below but not from above. For
present purposes it is best to understand them as being given the Scott
topology instead of the usual Hausdorff topology.) \emph{Probability
valuations} are those for which $m(X)=1$.

For every locale $X$ there can be constructed a \emph{valuation locale}
$\frak{V}X$ whose points are the valuations on $X$; it has a sublocale
$\frak{V}^{(1)}X$ whose points are the probability valuations. These were
first defined in~\cite{Heckmann:ProbPDISL}, following ideas of the
probabilistic powerdomain of \cite{JonesPlot:ProbPower}, and were further
developed in \cite{Integration} and \cite{CoqSpit:IntVal}. In particular the
results of \cite{CoqSpit:IntVal} were central in the quantum treatment of
\cite{HeunenLandsmanSpitters:ToposAQT}.

More recent work \cite{Vickers:Riesz} has shown that $\frak{V}$ and
$\frak{V}^{(1)}$ are the functor parts of monads, localic analogues of the
Giry monad in measure theory \cite{Giry:CatProb} and the distribution monad of
\cite{AbrBrand:UnifiedSTANLC}. The monads are commutative, meaning that
product valuations exist and a Fubini Theorem holds.

\cite{Vickers:Riesz} also describes in some detail the geometricity of
$\frak{V}$ (and likewise $\frak{V}^{(1)}$), and this will be key to our
development here. The topos-validity of $\frak{V}$ tells us that for any
bundle $p:\Sigma\rightarrow B$ we can also construct a bundle $q:\frak{V}%
_{B}\Sigma\rightarrow B$, by applying $\frak{V}$ to an internal locale in the
topos of sheaves over $B$ got using the Localic Bundle Theorem. But $\frak{V}$
is also geometric, and this tells us that the bundle construction works
fibrewise: in other words, each fibre $b^{\ast}\frak{V}_{B}^{(1)}\Sigma
=q^{-1}(\{b\})$ is homeomorphic to $\frak{V}^{(1)}(b^{\ast}\Sigma)$. We shall
not need to dwell on the topos theory here, but it is the topos theory that
tells us how the valuation locales of the fibres of $p$ can be bundled
together, with an appropriate topology on the bundle space, to make $q$.

\begin{remark}
\label{DecDiscValnRem}In general, to define a valuation on $X$ involves
defining its values for all \emph{opens} of $X$, or at least for a generating
lattice of opens. However, in the particular case where $X$ is discrete and
moreover has decidable equality, it is enough to define the values for all
\emph{points}. (The issue is similar to that well known for Lebesgue measure,
where the points all have measure 0 but opens have non-zero measure.) The
frame of opens, the powerset $\mathcal{P}X$, is the ideal completion of the
Kuratowski finite powerset $\mathcal{F}X$. Each $S\in\mathcal{F}X$ is a finite
disjoint union of singletons (we need decidability of equality to remove
duplicates and hence achieve disjointness), and so its measure is determined
by that of the singletons.
\end{remark}

%
%
%
%

\section{Spectral and related bundles}

\label{BundlesSpectralAndRelated}

We briefly summarize some topos approaches and how they lead to bundles. For
convenience we shall refer to the \emph{Imperial} approach of Isham and
Butterfield~\cite{IshamBfield:ToposPKST1},~\cite{IshamBfield:ToposPKST2} and
subsequently D\"{o}ring and Isham \cite{DoeringIsham:WhatThingTTFP}; and the
\emph{Nijmegen} approach of Heunen, Landsman and
Spitters~\cite{HeunenLandsmanSpitters:ToposAQT} (see
also~\cite{HeunenLandsmanSpitters:Tovariance}).

Both Imperial and Nijmegen start with a C*-algebra $\mathcal{A}$ (or, more
specifically in the Imperial case, a von Neumann algebra; in the finite
dimensional case these notions are equivalent) and then take a ``classical
point of view'' (to use the Imperial phrase) to be a commutative C*-subalgebra
$C$. By Gelfand-Naimark duality, $C$ is isomorphic to the algebra of
continuous maps $\Sigma_{C}\rightarrow\mathbb{C}$ where $\Sigma_{C}$ is the
spectrum, and it follows that $\Sigma_{C}$ provides a classically pure state
space for the self-adjoint elements of $C$, considered as observables. They
are all represented as maps $\Sigma_{C}\rightarrow\mathbb{R}$. Thus $C$ is a
context in which the physics of those observables is classical.

Let us write $\mathcal{C}(\mathcal{A})$ for the poset of commutative
C*-subalgebras (or, for Imperial, of commutative von Neumann subalgebras). The
base space of the bundle is constructed out of $\mathcal{C}(\mathcal{A})$, and
the fibre over a context $C$ is its spectrum. A significant difference between
the two approaches lies in how those fibres are topologized.

For \emph{Imperial}, the topos is the category of contravariant functors from
$\mathcal{C}(\mathcal{A})$ to $\mathbf{Set}$ (i.e. presheaves over
$\mathcal{C}(\mathcal{A})$). For a correct point-free approach one should take
the base space $B$ to be $\operatorname*{Idl}(\mathcal{C}(\mathcal{A})^{op})$,
the space of filters of $\mathcal{C}(\mathcal{A})$, with its Scott topology.
However, for many purposes it suffices to consider only the \emph{principal}
filters, of the form $\{D\in\mathcal{C}(\mathcal{A})\mid C\subseteq D\}$ for
some $C$. The Imperial workers seek the spectrum as an \emph{object} of the
topos, i.e. a sheaf, corresponding to a bundle that is a local homeomorphism
over $\operatorname*{Idl}(\mathcal{C}(A)^{op})$, and so the fibres are all discrete.

By Theorem~\ref{FibOpfibThm} the spectral bundle is therefore an opfibration,
giving fibre maps covariant with respect to the specialization order. Since
the fibre maps are of necessity contravariant with respect to context
inclusion (by a ``coarse-graining'' argument), it follows that the
specialization order is the opposite of context inclusion. This is achieved in
\cite{DoeringIsham:WhatThingTTFP} by taking the base space to be the filter
completion of the context poset, and its sheaves are then presheaves --
contravariant set-valued functors -- on the context poset. Any approach that
seeks a spectral object, a discrete space internally, will be subject to this
argument, so it is a consequence of using point-set topology internally.

For \emph{Nijmegen}, the topos is the category of covariant functors from
$\mathcal{C}(\mathcal{A})$ to $\mathbf{Set}$ and the corresponding base space
$B$ is $\operatorname*{Idl}\mathcal{C}(\mathcal{A})$, the space of ideals of
$\mathcal{C}(\mathcal{A})$, with its Scott topology. The Nijmegen workers seek
the spectrum as an internal compact regular -- actually, completely regular --
space in the topos, as expected from the Gelfand-Naimark duality. In fact,
they can construct an internal commutative C*-algebra (for which the fibre
over $C$ is just $C$ itself) and then use a topos-valid version
\cite{BanMulv:GlobalisationGDT} of Gelfand-Naimark duality to get an
internally compact regular spectrum. In the corresponding bundle the fibres
are all compact regular. Now by Theorem~\ref{FibOpfibThm} the spectral bundle
is a fibration, and the same reasoning shows that the specialization order
will agree with context inclusion. This is achieved in
\cite{HeunenLandsmanSpitters:Tovariance} by taking the base to be the ideal
completion of the context poset, and the sheaves are the covariant set-valued
functors on the context poset.

Note how the Nijmegen approach had to adopt a point-free approach to topology
in order to use the topos-valid form of Gelfand-Naimark. Whereas a point-set
approach always gives an opfibration, the change to point-free does not in
itself bring any consequences for the variance because a general bundle need
be neither fibration nor opfibration. However, any approach that seeks a
compact, regular, point-free spectrum will get a fibration with the same
variance as for Nijmegen.

Although these two approaches are the best worked out so far, they are not the
only possible. As we shall see in Section~\ref{finDimQM}, in finite
dimensional systems parts of the base and bundle spaces have natural manifold
structure and can therefore be given non-discrete Hausdorff topologies. Our
bundle description is intended to allow a wide range of such possibilities,
when described point-free.

%
%
%
%

\subsection{Valuation bundles and Born sections}

\label{BundlesValuationBundles}

Suppose $p:\Sigma\rightarrow B$ is our spectral bundle, giving rise to a
probability valuation bundle $q:\frak{V}_{B}^{(1)}\Sigma\rightarrow B$.

For the Nijmegen situation this was discussed in
\cite{HeunenLandsmanSpitters:ToposAQT} (referring to the development
in\cite{CoqSpit:IntVal}), and the paper proves (their Theorem~14) that its
global points -- the global sections of $q$ -- are equivalent to quasistates
of the C$^{\ast}$-algebra. Thus in particular each pure state $|\psi\rangle$
gives a global section of $q$, and it is continuous. This is already
interesting, since Kochen-Specker tells us that global sections of $p$ should
normally be impossible. If we try to extract external mathematics from the
topos internal by taking global sections, then the spectrum $p$ loses all its
points, but the valuation space $q$ retains points and we are familiar with
them through quantum pure states.

Once a section $\sigma$ is given for $q$, we can infer the probabilities that
arise in the Born rule. (See also \cite{Doering:QuStMSP},
\cite{DoeringIsham:ClassicalQPTV} for some other discussions of the Born
rule.) Suppose $C$ is a context, a point of $B$, and for simplicity consider
the simple case where $C$ is a commutative subalgebra (rather than a filter or
ideal). The fibre $C^{\ast}\Sigma$ is the Gelfand spectrum
$\operatorname*{Spec}(C)$, and any self adjoint in $C$ is represented as a map
$\tilde{O}:C^{\ast}\Sigma\rightarrow\mathbb{R}$. Using geometricity to see
$C^{\ast}\frak{V}_{B}^{(1)}\Sigma\cong\frak{V}^{(1)}C^{\ast}\Sigma$, we can
then get the probabilistic result of observing $O$ in state $|\psi\rangle$ as
the probabilistic valuation $\frak{V}^{(1)}(\tilde{O})(\sigma(C))$ on
$\mathbb{R}$, a random variable. The core probability is determined by $C$ and
$|\psi\rangle$, and all that remains is to allow for the way the observable
$O$ labels the eigenstates (in $C^{\ast}\Sigma$) with real numbers.

Our next step is to generalize the state $|\psi\rangle$. In the finite
dimensional case it corresponds to a projector $|\psi\rangle\langle\psi|$ of
rank 1 and it is possible to generalize to higher rank projectors by summing
over basis states. Thus for each context $D$, the elements of $D^{\ast}\Sigma$
can play a role similar to that of states $|\psi\rangle$. However, we do not
attempt to normalize -- to do so presents problems when allowing for
refinement of the context $D$ -- and so we no longer have probability
valuations in general. Thus we get sections of $q:\frak{V}_{B}\Sigma
\rightarrow B$. Bundling these together we postulate a \emph{Born section} of
the bundle $\frak{V}_{B^{2}}\Sigma^{2}\rightarrow B^{2}$. For each pair
$(D,C)$ of contexts it gives a valuation $\mathsf{Born}_{DC}$ of $D^{\ast
}\Sigma\times C^{\ast}\Sigma$.

At present we have the Born sections defined only in finite dimensional
situations. Nonetheless, the formal notion makes sense more generally and
would appear to have a good phenomenological footing in that it describes
probabilities. We therefore hope that the structure of bundle together with
Born maps, appropriately axiomatized, will prove a good foundation for
topos-based contextual physics.

%
%
%
%

\section{Finite dimensional quantum systems}

\label{finDimQM}

%
%
%
%

\subsection{Bundles for finite dimensional quantum systems}

\label{findimBundlesSectn}

In the following we propose a fibrational bundle in which the spaces $B$ and
$\Sigma$ contain manifolds. This contrasts with the original Imperial and Nijmegen
constructions, where the topology on $B$ arises solely from the order
structure on $\mathcal{C}(\mathcal{A})$, though it accords with the use of
flag manifolds in \cite{CaspersHLS:IntQLnLS}.
We fix an algebra $\mathcal{A}%
=M_{n}(\mathbb{C})$ and all constructions are relative to this algebra. Any
commutative sub-C$^{\ast}$-algebra $C$ is also finite
dimensional\footnote{Constructively this is not quite true. But we shall
construct our bundle in terms of the subalgebras that \emph{are} finite
dimensional.} and so, by Gelfand-Naimark duality, isomorphic as unital
C$^{\ast}$-algebra to $\mathbb{C}^{m}$ for some $m$. It has $m$ indecomposable
projectors (self-adjoint idempotents) $C_{i}$ corresponding up to isomorphism
to the elements of $\mathbb{C}^{m}$ that have a $1$ in a single component, $0$
elsewhere, and these are also projectors in $\mathcal{A}$ because $C$ is a
sub-C$^{\ast}$-algebra. They are orthogonal ($C_{i}C_{j}=0$ if $i\not =j$) and
complete (they sum to $1$). Note also that the trace of each is a positive
integer, equal to the rank, since the eigenvalues are 0 or 1.

We call a complete orthogonal sequence $\overrightarrow{C}$ of projectors a
\emph{projector system,} and define its \emph{type} to be the sequence of
traces of the projectors, an ordered partition of $n$. Generally we are only
interested in the \emph{set} of projectors (because this is what characterizes
the subalgebra $C$) and the set of traces as type; but for setting up the
bundle it is useful to remember the automorphisms. With this in mind, we
define --

\begin{definition}
Let $(\mu_{i})_{i=1}^{l}$ and $(\nu_{j})_{j=1}^{m}$ be two partitions of $n$.
A \emph{refinement} from $\overrightarrow{\nu}$ to $\overrightarrow{\mu}$ is a
function $r:\{1,\ldots,l\}\rightarrow\{1,\ldots m\}$ such that $\nu_{j}%
=\sum_{r(i)=j}\mu_{i}$ for every $j$. (Note that the reindexing function is in
the opposite direction to the refinement.)
\end{definition}

For each partition $\overrightarrow{\mu}$ we define the space
$\operatorname*{Proj}(\overrightarrow{\mu})$ of projector systems of type
$\overrightarrow{\mu}$. This can clearly be done localically, since it is
defined as a subspace of $\mathbb{C}^{ln^{2}}$, where $l$ is the length of
$\overrightarrow{\mu}$, by a system of equations. We also have a trivial
bundle over it, $\operatorname*{Proj}(\overrightarrow{\mu})\times
l\rightarrow\operatorname*{Proj}(\overrightarrow{\mu})$. Each fibre has
exactly $l$ elements, which is the Gelfand spectrum for all subalgebras of
type $\overrightarrow{\mu}$. For any refinement $r:\overrightarrow{\nu
}\rightarrow\overrightarrow{\mu}$ we now have a map $\operatorname*{Proj}%
(r):\operatorname*{Proj}(\overrightarrow{\mu})\rightarrow\operatorname*{Proj}%
(\overrightarrow{\nu})$ given by $\operatorname*{Proj}(r)(\overrightarrow
{C})_{j}=\sum_{r(i)=j}C_{i}$. This extends to a bundle map using
$\operatorname*{Proj}(r)(\overrightarrow{C},i)=(\operatorname*{Proj}%
(r)(\overrightarrow{C}),r(i))$. We thus have a diagram of bundles, whose shape
is the opposite of the category of partitions and refinements. Our bundle
$\Sigma\rightarrow B$ is now defined as a lax colimit of this diagram. More
precisely, it has images of all the spaces $\operatorname*{Proj}%
(\overrightarrow{\mu})$, subject to $\operatorname*{Proj}(r)(\overrightarrow
{C})\sqsubseteq\overrightarrow{C}$ and $(\operatorname*{Proj}%
(r)(\overrightarrow{C}),r(i))\sqsubseteq(\overrightarrow{C},i)$.

The imposition of this specialization order has two effects. The first, and
perhaps less obvious one, is with regard to invertible refinements. These
permute equal values in partitions, and the effect of the imposed
specialization is to make two projector systems equal if they generate the
same subalgebra -- because they have the same projectors, but permuted. This
makes $B$ a space of contexts as required. After that, the specialization
agrees with context inclusion as required for a fibrational bundle. The action
on the bundle spaces ensures that states are kept track of correctly under
permuting of the matrices.

To define the Born section, we define it first for projector systems, and show
that the definition respects refinements. Suppose $\overrightarrow{C}$ and
$\overrightarrow{D}$ have types $\overrightarrow{\mu}$ and $\overrightarrow
{\nu}$, of lengths $l$ and $m$. At this level (before imposing the
specialization) the spectra are finite discrete with decidable equality,
cardinalities $l$ and $m$, and so by Remark~\ref{DecDiscValnRem} a valuation
on the product can be defined by the values on its elements $(i,j)$. We define%
\[
\beta_{\overrightarrow{C}\overrightarrow{D}}(i,j)=\operatorname*{Tr}%
(C_{i}D_{j})\text{,}%
\]
a non-negative real, and then let $\mathsf{Born}_{CD}$ be the image of
$\beta_{\overrightarrow{C}\overrightarrow{D}}$ in $\frak{V}_{B^{2}}(\Sigma
^{2})$. Now suppose $\overrightarrow{C}\sqsubseteq\overrightarrow{C}^{\prime}$
and $\overrightarrow{D}\sqsubseteq\overrightarrow{D}^{\prime}$, with
refinements $r:\overrightarrow{\mu}\rightarrow\overrightarrow{\mu}^{\prime}$
and $s:\overrightarrow{\nu}\rightarrow\overrightarrow{\nu}^{\prime}$. Then%
\[
\operatorname*{Tr}(C_{i}D_{j})=\operatorname*{Tr}(\sum_{r(i^{\prime}%
)=i}C_{i^{\prime}}^{\prime}\sum_{s(j^{\prime})=j}D_{j^{\prime}}^{\prime}%
)=\sum_{r(i^{\prime})=i,s(j^{\prime})=j}\operatorname*{Tr}(C_{i^{\prime}%
}^{\prime}D_{j^{\prime}}^{\prime})
\]
and so $\beta_{\overrightarrow{C}\overrightarrow{D}}=\frak{V}\left(
\operatorname*{Proj}(r)\times\operatorname*{Proj}(s)\right)  (\beta
_{\overrightarrow{C}^{\prime}\overrightarrow{D}^{\prime}})$; it follows that
$\mathsf{Born}_{CD}\sqsubseteq\mathsf{Born}_{C^{\prime}D^{\prime}}$.

In the case where $C_{i}$ has trace 1 it is of the form $|\psi\rangle
\langle\psi|$ for some unit vector $|\psi\rangle$ (an eigenvector for
eigenvalue 1). Then%
\[
\operatorname*{Tr}(C_{i}D_{j})=\operatorname*{Tr}(|\psi\rangle\langle
\psi|D_{j})=\operatorname*{Tr}(\langle\psi|D_{j}|\psi\rangle)=\langle
\psi|D_{j}|\psi\rangle
\]
and so the probability agrees with that obtained from the Born rule for state
$|\psi\rangle$. More generally, $C_{i}$ is a sum $\sum_{k}|\psi_{k}%
\rangle\langle\psi_{k}|$ for orthonormal vectors, and $\mathsf{Born}%
_{CD}(i,j)$ is the sum $\sum_{k}\langle\psi_{k}|D_{j}|\psi_{k}\rangle$.

%
%
%
%
%
%
%
%
%
%

\subsection{Bundles for the qubit}

The qubit Hilbert space $\mathcal{H}$ is $\mathbb{C}^{2}$, and the C$^{\ast}%
$-algebra is the full matrix algebra $\mathcal{A}=M_{2}(\mathbb{C})$. The
commutative context $\ast$-subalgebras come in two types, $(2)$ (dimension 1)
and $(1,1)$ (dimension 2). The sole type (2) algebra is the centre
$\mathbb{C}1$, which we denote by $\bot$; $\operatorname*{Proj}(2)$ is the
1-point space. The 2-dimensional subalgebras are generated by two projectors
$C_{1},C_{2}$ such that $C_{1}+C_{2}=1$ and $\operatorname*{Tr}(C_{i})=1$.
Projectors $P$ of trace 1 are in bijection with self-adjoint unitaries
$U=2P-1$ of trace $0$, and such unitary can be written as a real linear
combination $a_{x}\sigma_{x}+a_{y}\sigma_{y}+a_{z}\sigma_{z}$ of the Paulis
such that $a_{x}^{2}+a_{y}^{2}+a_{z}^{2}=1$. (Any self-adjoint is a real
linear combination of the Paulis and $1$; for trace $0$ the coefficient of $1$
is $0$; and the further condition says that the matrix is an involution.)
Since each $C_{i}$ is determined by the other, it follows that
$\operatorname*{Proj}(1,1)$ is the 2-sphere $S^{2}$ -- this is the Bloch
sphere. In the context space $B$ antipodes will be identified, giving the real
projective plane $\mathbb{R}\mathsf{P}^{2}$, and $\bot$ is adjoined as a
bottom point in the specialization order. Thus $B$ is $\mathbb{R}%
\mathsf{P}^{2}$ lifted, $\Sigma$ is $S^{2}$ lifted.

For both Imperial and Nijmegen (but see also \cite{Spitters:SpaceMOSNCA}),
both $S^{2}$ and $\mathbb{R}P^{2}$ are given
their discrete topologies. For Imperial, the trivial points are adjoined
\emph{above} the rest in the specialization order as a top $\top$, so that
each trivial point is open. For Nijmegen they are adjoined \emph{below} as a
bottom $\bot$, so that the only open containing a trivial point is the whole space.

An interesting consequence of using the natural, manifold topologies, is that
the bundle has no continuous cross-sections. As~\cite{IshamBfield:ToposPKST1}
have pointed out, for dimensions 3 or more the lack of cross-sections for the
Imperial bundle is a manifestation of the Kochen-Specker
Theorem~\cite{KochenSpecker}. That theorem does not apply directly to
dimension 2, and indeed the corresponding Imperial bundle has many
cross-sections, albeit discontinuous with respect to the manifold topologies.
Essentially the Kochen-Specker Theorem as normally formulated is a
combinatorial one, relying on having sufficient complexity in the order
structure amongst the contexts. In the 2-dimensional case that order structure
(one trivial point related to everything, all other points incomparable with
each other) is too simple.


\end{document}